\theoremstyle{plain}
\newtheorem{theorem}{Theorem}
\newtheorem{proposition}[theorem]{Proposition}
\newtheorem{corollary}[theorem]{Corollary}
\theoremstyle{definition}
\newtheorem{definition}{\mdseries\scshape Definition}
\theoremstyle{remark}
\newtheorem{remark}{\mdseries\scshape Remark}
\begin{document}

\title[Igusa local zeta function for $x^n+y^m$]{The Igusa local zeta function for $x^n+y^m$}

\author{Rebecca Field}

\thanks{We would like to thank the NSF for the REU Site grant DMS-920084 that funded this project.}
\author{Vibhavaree Gargeya}

\author{Margaret M. Robinson} 
\author{Frederic Schoenberg}
\author{Ralph Scott}
\address{Department of Mathematics and Statistics, 
Mount Holyoke College,
50 College Street, South Hadley,
  MA 01075, USA}

\email{robinson@mtholyoke.edu}
\date{February 1994}

\begin{abstract}
 This paper provides specific results on the Igusa local zeta function for the curves 
$x^n+y^m$. In addition to specific results, we give an 
introduction to $p$-adic analysis and a discussion of various methods which have been 
used to compute these zeta functions.

\end{abstract}

\maketitle

\section{Introduction}

\pagestyle{plain}
\pagenumbering{arabic}
\setcounter{page}{1}

\parskip 5pt
\par
\noindent
This paper provides specific results on the Igusa local zeta 
function for the curves $f(x,y)=x^n+y^m$ where $m$ and $n$ are positive integers. We compute the following integral directly:
$$
Z(s)= \int_{{\bf Z}_p^2} |x^n+y^m|^s dx dy
$$
for $\mbox{Re} (s) >0$, where $| \cdot |$ represents the $p$-adic absolute value on the field of $p$-adic numbers, and $dx$ denotes the Haar measure on  normalized so that the measure of the $p$-adic integers ${\bf Z}_p$ is $1$. Igusa \cite{igusa-complex1}, \cite{igusa-complex2} showed that this local zeta function has a meromorphic continuation to the whole complex plane and is, in fact, a rational function of $t=p^{-s}$. We write $Z(s)=Z(t)$. 
In addition to these specific results the paper gives an 
introduction to the more general methods and techniques of $p$-adic analysis and 
introduces Igusa's $p$-adic stationary phase formula and the resolution of singularities which are two other methods that have been used to compute local zeta 
functions. For more general computations of local zeta functions for forms of various types, please see \cite{igusa-complexpow}, \cite{meuser}, \cite{lin}, \cite{hayes}, \cite{goldman}.
\par
This work was done in 1992 at the Summer Research Institute in Mathematics at Mount 
Holyoke College. Four undergraduates worked together under the direction of Margaret 
Robinson, who is on the faculty at Mount Holyoke College. The Institute was sponsored 
by the National Science Foundation and by the New England Consortium for 
Undergraduate Science Education. 
\par
After our summer's work, we were notified of the results of Professor C.Y. Lin
on an algorithm for computing the Igusa local zeta function for all curves using Igusa's 
$p$-adic stationary phase formula \cite{igusa-spf}. We hope that these specific examples will 
encourage interest in $p$-adic analysis and the questions surrounding the Igusa local 
zeta function.
\par
\noindent

\section{$P$-adic numbers.} In this paper, we will let $p$ stand 
for 
any fixed prime number, $p \in \{2,3,5,7,...\}$. We can construct the $p$-adic 
numbers, $ {\bf Q}_p$, from the rational numbers, ${\bf Q}$, in exactly the same as 
the way in which we construct the real numbers, $ {\bf R}$. 

To do real analysis, we construct the real numbers from the rational numbers by 
defining the real numbers as the limits of all Cauchy sequences of rational numbers. 
In this way we insure that all Cauchy sequences of rational numbers converge to real 
numbers, or, in other words, that the real numbers form a {\it complete} field. 
Hence, this construction is called completing the rational numbers with respect to 
the usual absolute value, $| \cdot |_\infty= \sqrt{x^2}$. When constructing ${\bf 
Q}_p$, we use a different absolute value, the $p$-adic absolute value. Thus, 
different sequences of rational numbers are Cauchy.

\noindent
\begin{definition}{\it The $p$-adic absolute value.}
A rational number, $a$, has a $p$-adic absolute value, $|a|_p$, such that:
\par
\noindent
$|a|_p = \left\{ \begin{array}{ll}
{1 \over p^{ord_p(a)}} & \mbox{if $a \not= 0$} \\
0    & \mbox{if $a =0.$}
\end{array}
\right. $
\par
\noindent
The quantity $ord_p a$ is called the order of $a$ and it is the highest power of $p$ 
dividing 
$a$. In other words, if $ a = p^n \cdot {\alpha \over \beta}$, where $\alpha$ and 
$\beta$ 
are integers that are not divisible by $p$, then $n= ord_pa$. 
\end{definition}
For example, if $p=5$ 
then $|75|_5=|5^2 \cdot 3|_5= 5^{-2}$, $|34|_5=|5^0 \cdot 34|_5=1$, and $|{1 \over 5}|_5=5$.

\noindent
\definition{\it A $p$-adic Cauchy sequence.}
A sequence of rational numbers, $\{ a_n\}$, is said to be a $p$-adic Cauchy sequence 
if given some $\epsilon >0$, there exists an integer $N$ such that for $i,j >N$, 
$|a_i-a_j|_p<\epsilon$.

Two $p$-adic Cauchy sequences, $\{a_n\}$ and $\{b_n\}$, are said to be equivalent if 
$|a_i-b_i|_p \to 0$ as $i \to \infty$. For example, if $p=5$ then 
$\{a_n\}=\{1,1,1,...\}$ and $\{b_n\}=\{1+5,1+5^2,1+5^3,...\}$ are equivalent since 
$|1-(1+5^i)|_5 \to 0$ as $i \to \infty$.

\noindent
\begin{definition} {\it The field of $p$-adic numbers, $ {\bf Q}_p$.}
The $p$-adic numbers, ${\bf Q}_p$, are defined to be all equivalence classes of $p$-
adic Cauchy sequences.
\end{definition}

Every equivalence class of Cauchy sequences in the real numbers has a standard 
representation, which we know as the decimal expansion. Likewise, it can be shown 
that every equivalence class of Cauchy sequences in the $p$-adic numbers has a unique 
representative of a certain form, which is called its $p$-adic expansion. The form of 
the decimal expansion does not determine a unique real number; for example, the 
sequences $\{1,1,...\}$ and $\{.9,.99,.999,...\}$ are equivalent Cauchy sequences 
which both represent the real number $1$. For a detailed explanation of the $p$-adic 
numbers and their expansions, see \cite{koblitz}.

\noindent
\begin{theorem} \cite{koblitz} {\it The ring of $p$-adic integers, ${\bf Z}_p$.}
Every equivalence class, $\alpha$, of sequences of rational integers has exactly one 
representative $p$-adic Cauchy sequence, $\{ \alpha_n \}$, where $\alpha_n \in {\bf Z}$, of 
the following form:
$$
\{\alpha_n\}=\{a_0, \ a_0+a_1p, \ a_0+a_1p+a_2p^2, \ ...\}
$$
where $\alpha_i =a_0+a_1p+a_2p^2+...+a_ip^i$ and $a_j \in \{0,1,...,p-1\}$, for $0 
\le j \le i$.
\end{theorem}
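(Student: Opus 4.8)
The plan is to show both existence and uniqueness of the stated representative, working directly from the definition of equivalence of $p$-adic Cauchy sequences. The central observation I would exploit is that two integers $a,b$ satisfy $|a-b|_p \le p^{-(k+1)}$ precisely when $a \equiv b \pmod{p^{k+1}}$, so convergence in the $p$-adic metric is the same as eventual agreement of residues modulo ever-higher powers of $p$. This translates the analytic statement into a statement about compatible systems of congruences, which is where the argument really lives.

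First I would establish existence. Given an equivalence class $\alpha$, pick any representative Cauchy sequence $\{b_n\}$ of rational integers. For each $i \ge 0$, the Cauchy condition gives an index $N_i$ beyond which all terms agree modulo $p^{i+1}$; define $\alpha_i \in \{0,1,\dots,p^{i+1}-1\}$ to be that common residue of $b_n \bmod p^{i+1}$ for $n > N_i$. The key compatibility check is that $\alpha_{i+1} \equiv \alpha_i \pmod{p^{i+1}}$, which is immediate since both are residues of a common tail term $b_n$ with $n$ large enough to work for both indices. A compatible tower of residues like this unwinds uniquely into base-$p$ digits: set $a_0 = \alpha_0$, and for $j \ge 1$ let $a_j \in \{0,\dots,p-1\}$ be the digit determined by $\alpha_j = \alpha_{j-1} + a_j p^j$. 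Then the sequence $\{\alpha_n\}$ with $\alpha_i = a_0 + a_1 p + \cdots + a_i p^i$ has the required form; one then checks $|\alpha_n - b_n|_p \to 0$ (indeed $\alpha_n \equiv b_n \pmod{p^{n+1}}$ eventually) so $\{\alpha_n\}$ lies in the class $\alpha$.

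Next I would prove uniqueness. Suppose $\{\alpha_n\}$ and $\{\alpha'_n\}$ are two sequences of the stated form, with digit sequences $\{a_j\}$ and $\{a'_j\}$, both equivalent to $\alpha$; then they are equivalent to each other, so $|\alpha_n - \alpha'_n|_p \to 0$. If the digit sequences differed, let $j$ be the least index with $a_j \ne a'_j$; then for every $n \ge j$ we have $\alpha_n - \alpha'_n \equiv (a_j - a'_j)p^j \not\equiv 0 \pmod{p^{j+1}}$, so $|\alpha_n - \alpha'_n|_p = p^{-j}$ for all $n \ge j$, contradicting convergence to $0$. Hence the digits, and therefore the representative sequence, are unique.

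The one genuinely delicate point — the "main obstacle" — is keeping the quantifiers on the indices $N_i$ straight: the $N_i$ grow with $i$, so one cannot choose a single threshold, and one must be careful that the partial sums $\alpha_i$ are extracted from tail terms $b_n$ that are simultaneously past $N_i$ and $N_{i+1}$ when verifying compatibility. Everything else is a routine translation between the $p$-adic metric and congruences mod powers of $p$; I would state that dictionary as a one-line lemma up front and then the two halves of the proof go through cleanly.
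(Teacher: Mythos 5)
Your proof is correct. Note first that the paper itself gives no proof of this theorem: it is stated with a citation to Koblitz and the surrounding text explicitly defers the details to that reference, so there is no in-paper argument to compare against. Your argument is the standard one. The dictionary between the $p$-adic metric on integers and congruences modulo powers of $p$ is exactly the right lemma to isolate, the extraction of a compatible tower of residues $\alpha_i \bmod p^{i+1}$ from the tails of a given representative and the unwinding into base-$p$ digits is the usual construction, and the uniqueness argument via the least index of digit disagreement is clean and complete. You are also right to flag the quantifier issue: the threshold $N_i$ depends on $i$, and the verification that $\{\alpha_n\}$ is equivalent to the original $\{b_n\}$ requires, for each target modulus $p^{k+1}$, restricting to $n \ge \max(k, N_k+1)$ so that both $\alpha_n \equiv \alpha_k$ and $b_n \equiv \alpha_k$ hold modulo $p^{k+1}$. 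One small point worth making explicit, since the paper remarks on the contrast with decimal expansions of reals where $0.999\ldots = 1$: your uniqueness argument also shows that no analogous collision occurs here, because a first disagreement at digit $j$ pins $|\alpha_n - \alpha'_n|_p = p^{-j}$ for all $n \ge j$ regardless of later digits, so there is no room for an ``all digits eventually $p-1$'' degeneracy.
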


These equivalence classes with their unique representative form the $p$-adic 
integers, denoted ${\bf Z}_p$. It is convenient to think of such an equivalence class 
as its representative expansion and we denote $\alpha \in {\bf Z}_p$ as an infinite series:
$\alpha = a_0+a_1p+a_2p^2+...$. This series representation of the $p$-adic integers 
is analogous to the decimal expansion for real numbers, and we often write $\alpha 
=a_0.a_1 a_2a_3...$. We define the norm of $\alpha \in {\bf Z}_p$, to be $0$ if 
$\alpha $ represents $0$ and in the non-zero case $|\alpha|_p=p^{-ord_p(\alpha)}$, 
where $ord_p(\alpha)$ is the exponent of the lowest power of $p$ with non-zero 
coefficient in the $p$-adic expansion of $\alpha$. Any rational integer, $n$, is a 
$p$-adic integer because it is the limit of the constant sequence $\{n,n,...\}$ and 
has a terminating $p$-adic expansion (to signify the repeating sequence), for 
example, for $p=5$, $405=1 \cdot 5+1 \cdot 5^2+3 \cdot 5^3=5(1+1 \cdot 5+3 \cdot 
5^2)$ and 
$|405|_5=5^{-1}$. Thus, the $ord_p(\alpha)$ is the highest power of $p$ which divides 
the $p$-adic expansion of $\alpha$. However, there are more $p$-adic integers then 
the rational integers, for example, if $p=5$ then $1/6$ is also an integer because it 
can be written as $1+4 \cdot 5+4 \cdot 5^2+...$.(To check that this expansion 
represents 
$1/6$ multiply it by $6=1+1\cdot5$ to get $1$.) However, not every rational 
number is a $p$-adic integer. The number $36/5=1/5(1+2 \cdot5 +1 \cdot 5^2)$ is not a $5$-adic integer; it is an 
element in ${\bf Q}_5$ of absolute value $5$. It can be shown that ${\bf Q}_p$, as we defined it above, is equivalently defined as 
the field of fractions of ${\bf Z}_p$. Hence, we can think of elements in ${\bf Q}_p$ 
as:
$$
{\alpha \over \beta}= {a_0+a_1p+a_2p^2+...\over b_0+b_1p+b_2p^2+...}
                    =p^{-(ord(\alpha)-ord(\beta))}u
$$
where $\alpha \in {\bf Z}_p$, $\beta \in {\bf Z}_p-\{0\}$ and $u$ is a unit in ${\bf 
Z}_p$. The units in ${\bf Z}_p$ are the invertible elements and they have $p$-adic 
absolute value $1$. In other words, $u$ is a unit if the first digit in its $p$-adic 
expansion is not $0$. For example, we can find the $5$-adic expansion of the number 
$30/225$ by long division (where we think of $0$ as $5+4 \cdot 5+4 \cdot 5^2+...$ and 
borrow from the right if we need to) as follows:
$$
{30 \over 
225}={5(1+1\cdot5)\over5^2(4+1\cdot5)}={1\over5}(4+1\cdot5+3\cdot5^2+1\cdot5^3+3\cdot
5^4+...)
$$
In the representation above, $6/9=2/3$ is a $5$-adic unit and we can sum the 
convergent geometric series above to show that we have the correct expansion.
$$
4 +1\cdot5+3\cdot5^2+1\cdot5^3+3\cdot5^4+...=4+5(1+3\cdot5)\sum_{i=0}^\infty
5^{2i}=4+5(16)\lim_{n\to \infty} {1-5^{2(n+1)} \over 1-5^2}={2\over3}
$$
For more examples of numerical computations with the $p$-adics see \cite{bachman}, \cite{koblitz},\cite{mahler}.

The $p$-adic integers, ${\bf Z}_p$, form a ring with base $p$ addition and 
multiplication, where carrying is done to the right. The set of $p$-adic integers with first 
digit equal to $0$ are divisible by $p$, hence denoted $p{\bf Z}_p$, and form the 
maximal ideal in ${\bf Z}_p$. The ideal $p^2{\bf Z}_p$ is the set of $p$-adic 
integers with first two digits equal to $0$ and $p^e{\bf Z}_p$ is the set of $p$-adic 
integers with first $e$ digits equal to $0$. Hence, we see that ${\bf Z}_p \supset 
p{\bf Z}_p \supset p^2{\bf Z}_p \supset ...\supset p^e{\bf Z}_p \supset ...\supset 
\{0\}$ is a descending chain of ideals. The units in ${\bf Z}_p$ are denoted by ${\bf Z}_p-p{\bf Z}_p$ since they are 
elements in ${\bf Z}_p$ but not in $p{\bf Z}_p$. The units can be written as a 
disjoint union of the $p-1$ cosets of $p{\bf Z}_p$: ${\bf Z}_p-p{\bf Z}_p=\coprod_{a\in \{1,2,..p-1\}} 
(a+p{\bf Z}_p)$. The $p$-adic integers themselves are an infinite disjoint union: 
$$
{\bf Z}_p = \coprod_{e=0}^\infty p^e{\bf Z}_p -p^{e+1}{\bf Z}_p
          = \coprod_{e=0}^\infty p^e({\bf Z}_p-p{\bf Z}_p)
$$

The $p$-adic numbers can be visualized as an infinite system of concentric circles 
representing the ideals $p^e{\bf Z}_p$ for all integer values of $e$ and corresponding 
to the inclusions $\{0\}\subset ...\subset p^2{\bf Z}_p \subset p{\bf Z}_p
\subset {\bf Z}_p \subset p^{-1}{\bf Z}_p \subset ...$. Since $0$ is contained in all 
the ideals, it lies at the center of all the circles. If the order of a $p$-adic number 
$\alpha$ is $e$ then $\alpha$ falls into the $p^e{\bf Z}_p$ circle but not into the 
$p^{e+1}{\bf Z}_p$ circle and hence we think of it as an element in the annulus 
$p^e({\bf Z}_p-p{\bf Z}_p)$. In this way, we see that the disjoint union above for the 
$p$-adic integers is really a union of annuli, and, more generally, ${\bf Q}_p$, itself, 
is a union of infinitely many disjoint annuli, each annulus containing the elements in the 
field of the same absolute value:
$$
{\bf Q}_p = \coprod_{e=-\infty}^\infty p^e{\bf Z}_p -p^{e+1}{\bf Z}_p
          = \coprod_{e=-\infty}^\infty p^e({\bf Z}_p-p{\bf Z}_p).
$$

\par
\noindent

\section{The definition of the Igusa local zeta function and $p$-adic analysis.}
The Igusa local zeta function, $Z(s)$,  associated to a polynomial in the ring of 
polynomials with $n$ variables and coefficients in ${\bf Z}$, $f(x) = 
f(x_1,x_2,...,x_n) \in {\bf Z}[x_1,x_2,..,x_n]$, is defined for a complex variable 
$s$, where $Re(s)>0$, as :
$$
Z(s)= \int_{{\bf Z}_p^n} |f(x)|_p^s dx.
$$
As $|f(x)|_p^s$ is always a power of $p^{-s}$, we can think of $Z(s)$ as a function 
of $t=p^{-s}$ and write $Z(t)$.

Before we continue, we must explain what we mean by integration in this setting.

\noindent
\begin{definition} {\it Measure of the set $E$.} Let $E$ be a union of sets 
$U_1,U_2,...U_m$ of the form $U_i= a_i +p^{N_i}{\bf Z}_p$ (analogous 
to intervals in real analysis), where $a_i \in {\bf Z}_p$, $N_i$ is any non-negative 
integer, and $1 \le i \le m$. We define the measure of the set $E$, $m(E)$, to be the 
integral over $E$ (written $m(E)=\int_E dx$) such that the following four 
properties hold:

1. $m(E)= \int_E dx \ge 0$ and $m(\emptyset)=0$

2. Translation invariance. If $a \in {\bf Z}_p$, $m(a +E)=\int_{a+E} dx = \int_E dx 
=m(E)$

3. If $U_i \cap U_j = \emptyset$, then $m(U_i \cup U_j)=\int_{U_i \cup U_j} dx= 
\int_{U_i} dx + \int_{U_j} dx = m(U_i)+m(U_j)$

4. $m({\bf Z}_p)=\int_{{\bf Z}_p} dx =1$.
\end{definition}

This measure is called the Haar measure on ${\bf Q}_p$ normalized so that $m({\bf Z}_p)=1$ and it is known to be the 
unique measure to satisfy these four properties.
\par
\noindent
\begin{proposition} {\it The measure of the ideal $p^e {\bf Z}_p$ is $p^{-e}$ for 
any integer $e$.} 
\end{proposition}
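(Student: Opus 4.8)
The plan is to reduce the statement to the two substantive axioms of the measure---translation invariance (property 2) and finite additivity over disjoint unions (property 3)---together with the normalization $m({\bf Z}_p)=1$ (property 4).

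First I would establish the case $e\ge 0$ by induction on $e$, the base case $e=0$ being exactly property 4. For the inductive step, observe that the coset decomposition of ${\bf Z}_p$ recorded above yields, inside $p^{e-1}{\bf Z}_p$, the disjoint union
$$
p^{e-1}{\bf Z}_p=\coprod_{a=0}^{p-1}\bigl(ap^{e-1}+p^{e}{\bf Z}_p\bigr),
$$
where disjointness and exhaustiveness follow by inspecting the coefficient of $p^{e-1}$ in the $p$-adic expansion of an element of $p^{e-1}{\bf Z}_p$. Each summand has the admissible form $a_i+p^{N_i}{\bf Z}_p$ with $N_i=e\ge 0$, so property 2 gives $m(ap^{e-1}+p^{e}{\bf Z}_p)=m(p^{e}{\bf Z}_p)$ for every $a$, and property 3 then gives $m(p^{e-1}{\bf Z}_p)=p\cdot m(p^{e}{\bf Z}_p)$. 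Combined with the inductive hypothesis $m(p^{e-1}{\bf Z}_p)=p^{-(e-1)}$, this yields $m(p^{e}{\bf Z}_p)=p^{-e}$.

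For $e<0$ the inclusion reverses, ${\bf Z}_p\subset p^{e}{\bf Z}_p$, and I would decompose the larger set as a disjoint union of $p^{-e}$ translates of ${\bf Z}_p$ by coset representatives of ${\bf Z}_p$ in $p^{e}{\bf Z}_p$; translation invariance and additivity then give $m(p^{e}{\bf Z}_p)=p^{-e}\cdot m({\bf Z}_p)=p^{-e}$. The one point requiring care here is that $p^{e}{\bf Z}_p$ with $e<0$ is not a finite union of balls of the restricted type $a+p^{N}{\bf Z}_p$ with $N\ge 0$ appearing in the Definition, so strictly speaking one is invoking the extension of $m$ to the full Haar measure on ${\bf Q}_p$ mentioned just above; with that extension in hand, the same translation-invariance-plus-additivity argument applies verbatim.

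I expect the only real obstacle to be bookkeeping rather than mathematics: making the ``disjoint and exhaustive'' claims about the coset decompositions genuinely rigorous from the uniqueness of the $p$-adic expansion (the Theorem above), and, for negative $e$, being explicit that we are working with the Haar measure on ${\bf Q}_p$ rather than the narrowly defined functional of the Definition. Everything else is a two-line application of properties 2, 3, and 4.
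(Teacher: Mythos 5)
Your proof is correct and at its core takes the same route as the paper's: decompose into cosets of $p^{e}{\bf Z}_p$, invoke translation invariance (property 2) to see they all have equal measure, and use additivity (property 3) together with the normalization $m({\bf Z}_p)=1$ (property 4). The paper does this in a single step, breaking ${\bf Z}_p$ at once into all $p^{e}$ cosets of $p^{e}{\bf Z}_p$ indexed by truncated expansions $b_0+b_1p+\cdots+b_{e-1}p^{e-1}$; your version performs the same count $p$ cosets at a time via induction on $e$, which is a cosmetic reorganization rather than a different idea. The one substantive addition in your write-up is the treatment of $e<0$: the proposition as stated covers \emph{any} integer $e$, but the paper's displayed argument (decomposing ${\bf Z}_p$ into $p^{e}$ cosets of $p^{e}{\bf Z}_p$) only makes literal sense for $e\ge 0$. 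You handle $e<0$ by reversing the inclusion and decomposing $p^{e}{\bf Z}_p$ into $p^{-e}$ disjoint translates of ${\bf Z}_p$, and you correctly flag that such a set falls outside the class described in the Definition (which restricts to balls $a+p^{N}{\bf Z}_p$ with $N\ge 0$), so that one must appeal to the Haar measure on all of ${\bf Q}_p$ rather than the narrowly defined functional. That is a careful point the paper leaves implicit; the remark following the proposition (the change-of-variables formula $d(p^{e}x)=p^{-e}dx$) relies on exactly this extension.
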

\begin{proof}
We break up ${\bf Z}_p$ into its $p^e$ cosets modulo $p^e {\bf Z}_p$ and use 
properties 2, 3, and 4 of the measure. Each coset is of the form $b_0+b_1 p +b_2 p^2 
+...+b^{e-1}p^{e-1} +p^e {\bf Z}_p$ and, as there are $p$ choices for each $b_i$, we 
get $p^e$ cosets. Hence,
$$
1= \int_{{\bf Z}_p} dx = \sum_{b \in {\bf Z}_p-p^e {\bf Z}_p} \int_{b+p^e{\bf Z}_p} 
dx =p^e \int_{p^e {\bf Z}_p} dx.
$$
From this argument, we see that $m(p^e {\bf Z}_p) = p^{-e} $
\end{proof}
\par
\noindent
\begin{remark} This proposition gives a change of variables formula which we write 
as $d(p^ex) = |p^e|_p dx = p^{-e}dx$. In fact, by the same proof the following,  more 
general proposition is true: If $E$ is a measurable set of the type given above then 
the measure of the set $p^e E$ is $p^{-e} m(E)$.
\end{remark}

\section{The Igusa local zeta function for the polynomial $f(x)=x^N$.} 
\par
Break ${\bf Z}_p$ into a disjoint union (as above) of annuli where $|x^N|_p^s$ is 
constant. Thus,
$$
Z(t)= \int_{{\bf Z}_p} |x^N|_p^s dx = \sum_{e=0}^\infty \int_{p^e({\bf Z}_p-p{\bf 
Z}_p)}|x^N|_p^s dx.
$$
Now, making the change of variables $x = p^e y$ for $y \in {\bf Z}_p-p{\bf Z}_p$
and $dx=p^{-e}dy$. We see that
$$
Z(t)= \sum_{e=0}^\infty p^{-e}\int_{{\bf Z}_p-p{\bf Z}_p} |(p^e y)^N|_p^s dy
=\sum_{e=0}^\infty p^{(-1-Ns)e} \int_{{\bf Z}_p-p{\bf Z}_p}dy
= {1 \over 1-p^{-1-Ns}} \int_{{\bf Z}_p-p{\bf Z}_p} dy
$$
As the units are a disjoint union of their $p-1$ cosets modulo $p{\bf Z}_p$, we have 
that
$$
\int_{{\bf Z}_p-p{\bf Z}_p} dy = \sum_{c=1}^{p-1} \int_{c+p{\bf Z}_p} dy =(p-
1)\int_{p{\bf Z}_p}dy = 1-p^{-1}.
$$
And we have that the Igusa local zeta function for $f(x)=x^N$ is precisely
$$
Z(t)= {1-p^{-1} \over 1-p^{-1-Ns}} = {1-p^{-1} \over 1-p^{-1} t^N}.
$$
This integral is extremely well-known and appears in Tate's Thesis \cite{tate}.
\par
\noindent
\section{ Local zeta functions and their Poincare series.}
As defined above, the Igusa local zeta function associated to the polynomial $f(x)$ 
in $n$ variables with coefficients in the integers ( or equivalently in the $p$-adic 
integers) has a Poincare series which contains arithmetic information about the 
cardinality of solutions of $f(x)$ in the ideals $p^e {\bf Z}_p$.

Before we consider the Poincare series, let's look more closely at $Z(t)$.
The zeta function is really an infinite sum of the measures of all points in ${\bf 
Z}_p^n$ that are mapped into each annulus of ${\bf Z}_p$ by the polynomial 
$f(x)$.Hence,
$$
Z(t)=\int_{{\bf Z}_p^n } |f(x)|_p^s dx = \sum_{e=0}^\infty m(\{x\in {\bf Z}_p^n | 
f(x)=p^e u \}) \ t^e
$$
where $u \in {\bf Z}_p-p{\bf Z}_p$.
From this expression for $Z(t)$, it is clear that $Z(1)=1$ and that $Z(0)= 
m(\{x|f(x)=u\})$. If we know this first measure and $f(x)$ is a homogeneous polynomial whose partial derivatives vanish modulo $p$ only at $0$, then by Hensel's Lemma we can compute the 
measures of the other sets and find the rational function for $Z(t)$ in terms of this 
first number. The problems in computing the Igusa local zeta function come when the 
polynomial has complicated singularities.

The Poincare series which is associated to the Igusa local zeta function is a 
generating function for the number of solutions of $f(x)$ modulo $p^e$ for all non-
negative integer $e$. It is defined as:
$$
P(t)=\sum_{e=0}^\infty |N_e| \ p^{-ne}t^e
$$
where $n$ is the number of variables in $f(x)$, $ |N_e|$ is the cardinality of the 
set: $N_e =\{a \in {\bf Z}_p^n-p^e{\bf Z}_p^n | f(a) \equiv 0 \bmod p^e\}$, and 
$N_0 =1$.
Borewicz and Shafarevic conjecture the rationality of $P(t)$ in \cite{borewicz}, p. 47. Igusa proves this conjecture in \cite{igusa-complex1}, \cite{igusa-complex2} by 
showing that $Z(t)$, and hence $P(t)$, is rational.

\noindent
\begin{proposition} \cite{igusa-complex1},\cite{igusa-complex2} The local zeta function associated to a 
polynomial is related to the polynomial's Poincare series as:
$$
Z(t)=P(t)- {1 \over t}(P(t)-1).
$$
\end{proposition}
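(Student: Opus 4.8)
\emph{Proof proposal.} The plan is to prove the identity at the level of coefficients of $t^{e}$, reducing everything to a telescoping sum. As recorded at the start of this section, expand
$$
Z(t)=\sum_{e=0}^{\infty}B_{e}\,t^{e},\qquad B_{e}=m\bigl(\{x\in{\bf Z}_{p}^{n}: ord_{p}(f(x))=e\}\bigr),
$$
i.e. $B_{e}$ is the measure of the set of $x$ on which $f$ takes a value of exact order $e$ (the set written $\{x:f(x)=p^{e}u\}$, $u$ a unit). Likewise write $P(t)=\sum_{e=0}^{\infty}A_{e}t^{e}$ with $A_{e}=|N_{e}|\,p^{-ne}$, so that by definition $A_{0}=1$.

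\textbf{Step 1: interpret $A_{e}$ as a measure.} The condition $f(x)\equiv 0\bmod p^{e}$ depends only on $x$ modulo $p^{e}$, so the set $\{x\in{\bf Z}_{p}^{n}:f(x)\equiv 0\bmod p^{e}\}$ is a disjoint union of exactly $|N_{e}|$ cosets of $p^{e}{\bf Z}_{p}^{n}$. By the proposition computing $m(p^{e}{\bf Z}_{p})=p^{-e}$, together with the fact that the Haar measure on ${\bf Z}_{p}^{n}$ is the $n$-fold product measure, each such coset has measure $p^{-ne}$, so translation invariance and finite additivity of the measure give
$$
A_{e}=|N_{e}|\,p^{-ne}=m\bigl(\{x\in{\bf Z}_{p}^{n}: ord_{p}(f(x))\ge e\}\bigr).
$$

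\textbf{Step 2: telescope.} Since $\{x: ord_{p}(f(x))\ge e\}$ is the disjoint union of $\{x: ord_{p}(f(x))=e\}$ and $\{x: ord_{p}(f(x))\ge e+1\}$, additivity of the measure yields $A_{e}=B_{e}+A_{e+1}$, that is, $B_{e}=A_{e}-A_{e+1}$ for every $e\ge 0$. (Note this holds even if $\{f=0\}$ has positive measure, as that contribution lies in every $A_{e}$ and cancels in the difference.) Because $|t|=p^{-\mathrm{Re}(s)}<1$ and all coefficients lie in $[0,1]$, the series below converge absolutely and may be reindexed freely; using $A_{0}=1$ so that $P(t)-1=\sum_{e\ge 1}A_{e}t^{e}$, we get
\begin{align*}
P(t)-\tfrac{1}{t}\bigl(P(t)-1\bigr)
&=\sum_{e=0}^{\infty}A_{e}t^{e}-\sum_{e=1}^{\infty}A_{e}t^{e-1}\\
&=\sum_{e=0}^{\infty}A_{e}t^{e}-\sum_{e=0}^{\infty}A_{e+1}t^{e}
=\sum_{e=0}^{\infty}(A_{e}-A_{e+1})t^{e}
=\sum_{e=0}^{\infty}B_{e}t^{e}=Z(t).
\end{align*}

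There is no deep obstacle here; the identity is essentially a reindexing of a telescoping series. The one point requiring care is the bookkeeping in Step 1 — that counting solutions modulo $p^{e}$ and multiplying by $p^{-ne}$ genuinely computes the Haar measure of the corresponding finite union of balls — which is exactly where the normalization $m({\bf Z}_{p})=1$ and the product structure of the measure on ${\bf Z}_{p}^{n}$ are used.
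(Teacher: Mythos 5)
Your proof is correct and takes essentially the same approach as the paper: both identify $|N_e|p^{-ne}$ as the Haar measure of $f^{-1}(p^e{\bf Z}_p)$, express $Z(t)$ via measures of the annuli $f^{-1}(p^e{\bf Z}_p)\setminus f^{-1}(p^{e+1}{\bf Z}_p)$, and then telescope/reindex. Your write-up just separates the bookkeeping into named coefficients $A_e$, $B_e$, which makes the reindexing step slightly more explicit but is not a different argument.
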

\noindent
\begin{proof} If $f^{-1}(p^e {\bf Z}_p)$ is the pull-back (or inverse image) of 
$p^e{\bf Z}_p$ then
$$
f^{-1}(p^e{\bf Z}_p)=\{x \in {\bf Z}_p^n | f(x) \equiv 0 \bmod p^e \}=
\coprod_{a \in N_e} (a +p^e {\bf Z}_p).
$$
Hence, $m(f^{-1}(p^e {\bf Z}_p))-m(f^{-1}(p^{e+1}{\bf Z}_p)) $ is the measure of the 
set of points in ${\bf Z}_p^n$ that map to the annulus $p^e{\bf Z}_p-p^{e+1}{\bf 
Z}_p$. With this in mind, we see the following:
\begin{align*}
Z(t) &= \int_{{\bf Z}_p^n } |f(x)|_p^s dx \\
&= \sum_{e=0}^\infty \int_{f^{-1}(p^e {\bf Z}_p)-f^{-1}(p^{e+1}{\bf Z}_p)} p^{-es} 
dx \\
&= \sum_{e=0}^\infty p^{-es}[m(f^{-1}(p^e {\bf Z}_p))-m(f^{-1}(p^{e+1}{\bf Z}_p))].
\end{align*}
We remember that $f^{-1}(p^e {\bf Z}_p))= \coprod_{a \in N_e} (a +p^e{\bf Z}_p)$. The 
number $|N_e|$ is the number of centers $ a \in {\bf Z}_p^n - p^e{\bf Z}_P^n$ such 
that $f(a) \equiv 0 \bmod p^e$. The measure of a disk $a+p^e{\bf Z}_p^n$ is $p^{-en}$ 
since $\int_{a+{\bf Z}_p^n} dx =\int_{a_1+{\bf Z}_p} dx_1 \int_{a_2+{\bf Z}_p} dx_2 
...\int_{a_n+{\bf Z}_p} dx_n =p^{-en}$. Thus, $m(f^{-1}(p^e{\bf Z}_p))=|N_e|p^{-en}$. 
Now we see that
\begin{align*}
Z(t) &= \sum_{e=0}^\infty |N_e| (p^{-n}t)^e -\sum_{e=0}^\infty |N_{e+1}|p^{-
(e+1)n}t^e \\
&= P(t)-t^{-1}(P(t)-1).
\end{align*}
\end{proof}
\section{ Direct computation of the Igusa local zeta for $x^n+y^m.$}
As an example to familiarize the reader with $p$-adic analysis, we will calculate  the 
Igusa local zeta function for $f(x,y) = x^n + y^m$ directly. We will show that
$$
Z(t) = \frac{(1-p^{-1}){p^{-1}}}{(1-p^{-{m+n\over (m,n)}}t^{{mn\over 
(m,n)}})}  [ \sum_{j=0}^{{n\over (m,n)}-1}p^{-\lfloor jm/n \rfloor - j}t^{mj} + 
\sum_{j=0}^{{m\over (m,n)}-1}p^{-\lfloor jn/m \rfloor-j} t^{nj} ]
$$
$$ 
 +{[(p-1)^2-(|N(0)|-1)]p^{-2}(1-p^{-1}t) + (|N(0)|-1)(p^2-p)p^{-4}t \over (1-p^{-1}t) (1-p^{-{m+n \over (m,n)}} t^{mn \over (m,n)})},
$$
First we break ${\bf Z}_p$ up into its infinitely many annuli $\coprod_{e= 0}^{\infty} p^e 
({\bf Z}_p - p{\bf Z}_p)$ and write it as

$$
 Z(t) = \int_{{\bf Z}_p} \sum_{e= 0}^{\infty} \int_{p^e({\bf Z}_p - p{\bf Z}_p)}|x^n 
 + y^m|_p^s dxdy.
$$
Similarly, we break up the region of integration for the other variable, and make the 
change of variables $x \rightarrow p^ex$ and $y\rightarrow p^fy$, to get that

$$
Z(t) = \sum_{f= 0}^{\infty} \sum_{e= 0}^{\infty} p^{-e} p^{-f} \int_{({\bf Z}_p - 
p{\bf Z}_p)^2} |p^{ne}x^n + p^{mf}y^m|_p^s dxdy.
$$
Note that $x$ and $y$ are now both units in $({\bf Z}_p - p{\bf Z}_p)$.
To compute the integral above, we break it into three partial integrals. We consider 
the three cases:
1) $ne > mf $,
2) $mf > ne $, and
3) $mf = ne $.

\par
In the first case where $ne > mf $, we have a factor of $p^{mf}$ in the integrand. In 
this case, we can write the partial integral as 
$$
I_1(t)=\sum_{f\ge 0} \sum_{e> mf/n} p^{-e} p^{-f} \int_{({\bf Z}_p - p{\bf Z}_p)^2} 
|p^{mf} (p^{ne - mf}x^{n} + y^m)|_p^s dxdy 
$$

Since $y$ is a unit, the integral is equal to the measure of $({\bf Z}_p - p{\bf 
Z}_p)^2$ which is $(1-p^{-1})^2$ and we have that
$$
I_1(t) = \sum_{f\ge 0} \sum_{e > mf/n} p^{-e}p^{-f}t^{mf}(1-p^{-1})^2
$$
Now,
\begin{eqnarray}\label{sum}
\sum_{e > mf/n} p^{-e} = \sum_{e=0}^\infty p^{-e} - \sum_{e=0}^{\lfloor mf/n \rfloor} 
p^{-e}
\end{eqnarray}

We need to sum from $e=0$ to the greatest integer in $mf/n$ which we will denote by 
$\lfloor mf/n \rfloor$. This sum depends upon the congruence class of $f$ modulo $n 
\over (m,n)$ where $(m,n)$ is the greatest common factor of $m$ and $n$. If $f={nk 
\over (m,n)}$ where $k$ is a positive integer then we sum from $e=0$ to $mk \over 
(m,n)$. If $ f = {nk \over (m,n)}+ 1$ then $mf/n = {mk \over (m,n)} + m/n$
and we sum from $e=0$ to ${mk \over (m,n)} + \lfloor m/n \rfloor$. 
If $f = {nk \over (m,n)} + 2 $ then $ mf/n = {mk \over (m,n) }+ 2m/n$
and we sum from $e=0 $ to $ {mk \over (m,n)}+ \lfloor 2m/n \rfloor$.
If $f={nk \over (m,n)}+ j$ where $0 \le j \le {n\over (m,n)}-1$ then $mf/n =
{ mk 
\over (m,n)} + jm/n$
and we sum from $e=0$ to $ {mk \over (m,n)} + \lfloor jm/n \rfloor$.

Therefore, 
$$
\sum_{e=0}^{\lfloor mf/n \rfloor}p^{-e} = \sum_{j=0}^{{n \over (m,n)}-1} \
\sum_{e=0}^{{mk \over (m,n)}+ \lfloor jm/n \rfloor} p^{-e} = \sum_{j=0}^{{n \over 
(m,n)}-1} {(1-p^{-{mk\over (m,n)}-\lfloor jm/n \rfloor -1}) \over (1-p^{-1})} 
$$

Using formula $(\ref{sum})$ above, we have that the inner sum in $I_1(t)$ is
$$
\sum_{e >mf/n} p^{-e} = \sum_{j=0}^{{n\over (m,n)}-1} {p^{-{mk\over (m,n)}-\lfloor 
jm/n \rfloor-1} \over (1-p^{-1})}
$$

Using this sum, $I_1(t)$ becomes

\begin{eqnarray*}
 I_1(t) & = &(1-p^{-1})\sum_{k\ge 0}\sum_{j=0}^{{n\over(m,n)}-1}p^{-{mk\over (m,n)}-
 \lfloor jm/n \rfloor-1}p^{-{nk\over (m,n)}-j}t^{{mnk\over (m,n)}+mj}\\
 & = &\frac{(1-p^{-1})p^{-1}}{(1 - p^{-{m+n\over (m,n)}}t^{{mn\over 
 (m,n)}})}\sum_{j=0}^{{n\over (m,n)}-1}p^{-\lfloor jm/n \rfloor - j}t^{mj}
\end{eqnarray*}

Similarly in the second case, when $mf > ne$, the second partial integral becomes:
$$
I_2(t)=\frac{(1-p^{-1})p^{-1}}{1 - p^{-{m+n\over (m,n)}}t^{{mn\over 
(m,n)}}}\sum_{j=0}^{{m\over (m,n)}-1}p^{-\lfloor jn/m \rfloor - j }t^{nj}
$$ 

Therefore, we see that 
$$
I_1(t)+I_2(t) = \frac{(1-p^{-1}){p^{-1}}}{(1-p^{-{m+n\over (m,n)}}t^{{mn\over 
(m,n)}})} \  [ \sum_{j=0}^{{n\over (m,n)}-1}p^{-\lfloor jm/n \rfloor - j}t^{mj} + 
\sum_{j=0}^{{m\over (m,n)}-1}p^{-\lfloor jn/m \rfloor-j} t^{nj} ]
$$

Finally in the third and most difficult case where $mf=ne$,

\begin{eqnarray*}
I_3(t) & = & \sum_{f \ge 0, \ {mf \over n} \in {\bf Z}} p^{-mf/n-f} t^{mf} 
\int_{({\bf 
Z}_p - p{\bf Z}_p)^2} |f(x,y)|^s dxdy \\
\ & = & \sum_{k=0}^\infty 
(p^{-{m+n \over (m,n)}} t^{mn \over (m,n)} )^k  \int_{({\bf Z}_p - p{\bf Z}_p)^2} 
|f(x,y)|^s dxdy \\
\ & = & {1 \over (1-p^{-{m+n \over (m,n)}} t^{mn \over (m,n)})} \int_{({\bf Z}_p - 
p{\bf Z}_p)^2} |f(x,y)|^s dxdy
\end{eqnarray*}
and we will show that
$$
I_3^\prime (t)=\int _{({\bf Z}_p - p{\bf Z}_p)^2} |f(x,y)|^s dxdy 
= [(p-1)^2-(|N(0)|-1)]p^{-2} + {(|N(0)|-1)(1-p^{-1})p^{-2}t \over (1-p^{-1}t)},
$$
where $f(x,y) = x^m+y^n$, and $p$ does not divide both $m$ and $n$, $|N(0)|$ is 
the cardinality of the set $N(0)=\{(x,y) \in {\bf F}_p \times {\bf F}_p \ | \ x^m+y^n 
\equiv 0 \bmod p \}=N_1+1$, and ${\bf F}_p$ is the finite field with $p$ elements.
\par
\noindent
{\bf First proof using counting argument}
\par
\noindent
${({\bf Z}_p - p{\bf Z}_p)^2}= \coprod_{k=0}^\infty \{ (x,y) \in  U^2 | 
|f(x,y)|^s = t^k \} $, where $t = p^{-s}$.
So, 
$$
I_3^\prime (t) = \sum_{k=0}^\infty t^k \ m \{ (x,y) \in U^2 \ | \ |f(x,y)|^s = t^k 
\}.
$$ 
Hence, 
\begin{eqnarray*}
I_3^\prime(t)
= \lim_{r \to \infty} \sum_{k=0}^r t^k \ m \{(x,y) & \in & (c_o+c_1p+...+c_{r-1}
p^{r-1})+p^r{\bf Z}_p \\
& \ & \times  (d_0+d_1p+...+d_{r-1}p^{r-1})+p^r{\bf Z}_p \\
& \ &  | \ c_0,d_0 \in  
{\bf F}_p^*, c_i,d_i \in  {\bf F}_p \ for \ i>0, |f(x,y)|^s = t^k \}.
\end{eqnarray*}
This interpretation means that 
\begin{eqnarray*}
I_3^\prime (t)&=& \lim_{r\to\infty} \sum _{k=0}^r
\ card \{(x,y) \in  (c_0+c_1p+...+c_{r-1}p^{r-1}) \times (d_0+d_1p+...+d_{r-1}p^{r-
1}) \\
& |& \
|f(x,y)|^s = t^k \} \cdot \ t^k\  m(p^r{\bf Z}_p)^2.
\end{eqnarray*}
Now since $|f(x,y)|^s = t^k$ if and only if $ord_p(x^m+y^n)=k$. If we let 
$|ord=k|_r$ denote 
the cardinality of the set $\{ (x,y) \in  (c_0+c_1p+...+c_{r-1}p^{r-
1})\times (d_0+d_1p+...d_{r-1}p^{r-1}) \ | \ |f(x,y)^s = t^k \}.$
Then we see that $I_3^\prime(t)= \lim_{r\to\infty} \sum _{k=0}^r |ord=k|_r t^k 
p^{-2r}$.

What is $|ord=k|_r$?
\begin{eqnarray*}
|ord=0|_r = \ card \{(x,y) & \in &  (c_0+c_1p+...+c_{r-1}p^{r-1}) \\
&\times& (d_0+d_1p+...+d_{r-1}p^{r-1}) \ | \ ord_p(x^m+y^n) = 0 \}.
\end{eqnarray*}
Now  $ord_p(x^m+y^n) = 0$ if and only if $p$ does not divide $(c_0+c_1p+...c_{r-
1}p^{r-1})^m + 
(d_0+d_1p+...+d_{r-1}p^{r-1})^n$ which happens if and only if $p$ does not divide 
${c_0}^m + {d_0}^n$ by the binomial theorem.  Thus, 
$c_1,...,c_{r-1}$ and $d_1,...,d_{r-1}$ do not influence the condition 
$ord_p(x^m+y^n) = 0$ and are each free to take any value in ${\bf F}_p$. With this in mind, 
we see that 
$$
|ord=0|_r= card \{(c_0,d_0) \in ({\bf F}_p^{*})^2 \ | \  
{c_0}^m+{d_0}^n)\equiv 0 \bmod p \} \ p^{2(r-1)}
= [ (p-1)^2 - (N(0)-1) ] p^{2r-2}.
$$
$$
|ord=1|_r = \{(x,y) \in  (c_0+c_1p+...+c_{r-1}p^{r-1})\times(d_0+d_1p+...+d_{r-
1}p^{r-1}) \ | \ ord_p(x^m+y^n) = 1 \}
$$
Now, $ord_p(x^m+y^n) = 1$ if and only if $p$ divides $ x^m+y^n$  and $p^2 $ does not 
divide $x^m+y^n$ which happens if and only if
$p$ does divide $c_0^m+d_0^n$ but $p^2$ does not divide $(c_0+c_1p+...+c_{r-1}p^{r-
1})^m+(d_0+d_1p+...+d_{r-1}p^{r-1})^n$. This last condition means that 
$p^2$ does not divide $(c_0+c_1p)^m + 
(d_0+d_1p)^n$ which by the binomial theorem implies that $p^2$ does not divide 
$ {c_0}^m + mc_0c_1p 
+ {d_0}^n + nd_0d_1p$.
Now, if $ p$ divides $ {{c_0}^m+{d_0}^n}$, then 
${c_0}^m+{d_0}^n = p^e$, for some $e \ge 1$ and we have that $ord_p(x^m+y^n)=1$ if.
$ p^2$ 
does not divide $ p^e + mc_0c_1p + nd_0d_1p$.
This last condition implies that $p$ does not divide $p^{e-1} + mc_0c_1 + nd_0d_1$.  
Now, $c_0$ and $d_0$ are units, so if $m$ is a unit, then $p^2 / {x^m+y^n}$ implies 
that $c_1 \equiv (-p^{e-1} - nd_0d_1)m^{-1}c_0^{-1} \bmod p$.  Thus, $d_1$ can be any 
element of 
${\bf F}_p$, and $c_1$ is determined by $d_1$. So, there are $p$ choices for 
$(c_1,d_1) \in  {\bf F}_p^2$ such that $p^2$ divides $x^m+y^n$, and therefore $(p^2-
p)$ choices such that $p^2 $ does not divide $x^m+y^n$.  Similarly, if $n$ is a unit, 
then there 
are again $(p^2-p)$ possibilities for $(c_1,d_1)$ such that $p^2$
does not divide $x^m+y^n$.  
Finally, 
\begin{eqnarray*}
|ord=1|_r &=& card \{ (c_0,d_0)\in ({\bf F}_p^*) \ | \  {x^m+y^n} \equiv 0 \bmod p\} \\ 
&\times& card \{( c_1,d_1)\in ({\bf F}_p^2) \ | \ x^m+y^n \not\equiv 0 \bmod p^2 \} \\
&\times& \ card \{(c_2,...,c_{r-1},d_2,...,d_{r-1}) \in ({\bf F}_p)^{2(r-2)} \} \\
&=& (N(0)-1) (p^2-p)  p^{2r-4}.
\end{eqnarray*}
In general, for $k \ge 1$, $|ord=k|_r = card\{(c_0,d_0)\in ({\bf F}_p^{*})^2 \ |
{x^m+y^n} \equiv 0 \bmod p\} \cdot \ card \{(c_1,d_1)\in {\bf F}_p^2 \ | \ x^m+y^n
\equiv 0 \bmod p^2 \}  \cdots \ card \{(c_{k-
1},d_{k-1}) \ |  x^m+y^n \equiv 0 \bmod p^{k}\} \cdot card\{(c_k,d_k) \ | x^m+y^n 
\not\equiv 0 \bmod p^{k+1}\} 
\cdot p^{2(r-1-k)}.$
And hence
$$
|ord=k|_r= (N(0)-1) (p)^{k-1} (p^2-p) p^{2(r-k-1)}.
$$
Now, we can compute our integral.
\begin{eqnarray*}
I_3^\prime(t) &=& \lim_{r\to\infty} p^{-2r} \sum_{k=0}^r 
|ord=k|_r t^k \\
&=& \lim_{r\to\infty} p^{-2r} [ ((p-1)^2-(N(0)-1))p^{2r-2} + \sum 
_{k=1}^r ((N(0)-1)p^{k-1}(p^2-p)p^{2(r-k-1)}) t^k ] \\
&=& ((p-1)^2-(N(0)-1))p^{-2} + \lim_{r\to\infty} p^{-2r} \sum _{k=0}^r ((N(0)-
1)p^k(p^2-
p)p^{2(r-k-2)}t^{k+1}) \\
&=& ((p-1)^2 - (N(0)-1))p^{-2} + 
(N(0)-1)(p^2-p)p^{-4}t \sum _{k=0}^\infty p^{-k}t^k \\
&=& ((p-1)^2 - (N(0)-1))p^{-2} + {(N(0)-1)(1-p^{-1})p^{-2}t \over 1-p^{-1}t}.
\end{eqnarray*}
In the case where $m=2$ and $n=3$,
$N(0) = p$, so
$$
I_3^\prime(t) = ((p-1)^2 - (p-1))p^{-2} + {(p-1)(1-p^{-1})p^{-2}t 
\over 1-p^{-1}t}
= \frac{(p-1)(p-2)p^{-2} + t(1-p^{-1})p^{-2}}{1-p^{-1}t}.
$$
\par
\noindent
{\bf Second proof using a change of variables}
\par
\noindent
We will compute $I_3^\prime (t)=\int_{{({\bf Z}_p - p{\bf Z}_p)^2}}|x^m+y^n|^s \ dxdy$ using another method.
Write the integral as a sum over its cosets modulo $p$ and count those points for which $x^m+y^n$ is and is not $0$ modulo $p$.
\begin{eqnarray*}
\int_{{({\bf Z}_p - p{\bf Z}_p)^2}}|x^m+y^n|^s \ dxdy&=&
\sum_{(a,b)\in {\bf F}_p^*\times {\bf F}_p^*}\int_{(a,b)+p{\bf Z}_p^2}|x^3+y^2|^s \ dxdy\\
&=& \left((p-1)^2-(|N(0)|-1)\right)\int_{(p{\bf Z}_p)^2} \ dxdy \\
& \ & \ + \ (|N(0)|-1)p^{-2} \int_{{\bf Z}_p^2}|(a+px)^m+(b+py)^n|^s \ dxdy
\end{eqnarray*}
Expand $(a+px)^m+(b+py)^n$ to get $|(a^m+ma^{m-1}px+ \ldots +p^mx^m)+(b^n+nb^{n-1}py+ \ldots +p^ny^n)|=|p(c + ma^{m-1}x+nb^{n-1}y+p( \ldots)|$
where $a^m+b^n=cp$ for some $c \in {\bf F}_p^*$. There is a measure-preserving transformation $(x,y) \to (x^*,y^*)$ such that $x^*=g_1(x,y)=ma^{m-1}x+nb^{n-1}y+p( \ldots)$ and $y^*=g_2(x,y)=y$. The Jacobian of this transformation is 
\begin{eqnarray*}
\det\left|
\begin{array}{ll}
\frac{\partial g_1}{\partial x}& {\frac{\partial g_1}{\partial y}} \\
\frac{\partial g_2}{\partial x}& {\frac{\partial g_2}{\partial y}}
\end{array}
\right|&=&\det\left| \begin{array}{ll}
\frac{\partial g_1}{\partial x}(a,b)& {\frac{\partial g_1}{\partial y}}\\
0& {1}
\end{array}
\right|\\
&=&\frac{\partial g_1}{\partial x}(a,b)=ma^{m-1}
\end{eqnarray*}
If $ma^{m-1}={0 \bmod p}$, then let $g_1(x,y)=x$ and $g_2=ma^{m-1}x+nb^{n-1}y+p( \ldots)$ so that  the Jacobian becomes $nb^{n-1} \not = 0 \bmod p$ since $p$ does not divide both $m$ and $n$. For a general description of this measure-preserving map see [8,9].
Now, we see that $\left|f(a+px,b+py)\right|^s \ = \ \left|p\left(x^*+c\right)\right|^s$ where $dxdy=dx^*dy^*$. Returning to the integral above,
\begin{eqnarray*}
I_3^\prime (t)&=&((p-1)^2-(|N(0)|-1))p^{-2}+ (|N(0)|-1)p^{-2}t \int_{{\bf Z}_p^2}\left|x^*+c\right|^s \ dx^*dy^*\\
&=&((p-1)^2-(|N(0)|-1))p^{-2}+ \ {(|N(0)|-1)p^{-2}t (1-p^{-1}) \over 1-p^{-1}t}
\end{eqnarray*} 
\par
\noindent
Hence, $Z(t)= I_1(t)+I_2(t)+I_3(t)$ and we have computed the local zeta function in terms of $|N(0)|$. Below we compute $|N(0)|$ for our curve.
\par
\noindent
\begin{theorem} For any prime $p$ the number of solutions to $x^m+y^n=0$ in ${\bf 
F}_p$, which we have called $|N(0)|$ above, is
$$
|N(0)| = \left\{ \begin{array}{ll}
1+(p-1) gcd(m,n,p-1) & \mbox{if $p=2$ or }\\
\quad & \mbox{if $ord_2(p-1)>min \{ord_2(m),ord_2(n) \}$ } 
\\
\\
1    & \mbox{otherwise.}
\end{array}
\right. 
$$
\end{theorem}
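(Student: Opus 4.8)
The plan is to peel off the trivial solution and reduce the rest to counting solutions of a single linear congruence modulo $p-1$.  First I would note that $(x,y)=(0,0)$ is always a solution and is the only solution with $x=0$ or $y=0$: if $x=0$ then $y^n=0$ in the field ${\bf F}_p$, so $y=0$, and symmetrically.  Hence $|N(0)|=1+S$, where $S=\#\{(x,y)\in {\bf F}_p^*\times {\bf F}_p^* : x^m=-y^n\}$.  The prime $p=2$ I would dispose of by hand: the only nonzero solution of $x^m+y^n=0$ in ${\bf F}_2$ is $(1,1)$, so $S=1$ and $|N(0)|=2=1+(p-1)\gcd(m,n,p-1)$, consistent with the claimed formula.

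For $p$ odd, fix a primitive root $g$ of ${\bf F}_p^*$ and write $x=g^u$, $y=g^v$ with $u,v\in {\bf Z}/(p-1){\bf Z}$; also $-1=g^{(p-1)/2}$.  Under the bijection $(x,y)\leftrightarrow(u,v)$ from $({\bf F}_p^*)^2$ to $({\bf Z}/(p-1){\bf Z})^2$, the equation $x^m=-y^n$ becomes the linear congruence $mu-nv\equiv (p-1)/2 \pmod{p-1}$, so $S$ is exactly the number of solutions $(u,v)$ of this congruence.  Here I would invoke the standard count: for the group homomorphism $\phi(u,v)=mu-nv$ on $({\bf Z}/(p-1){\bf Z})^2$, the image is the subgroup $d\,{\bf Z}/(p-1){\bf Z}$ with $d=\gcd(m,n,p-1)$, and counting the kernel gives $|\ker\phi|=(p-1)d$; thus $\phi(u,v)=c$ has $(p-1)d$ solutions when $d\mid c$ and none otherwise.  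With $c=(p-1)/2$ this yields $S=(p-1)\gcd(m,n,p-1)$ if $\gcd(m,n,p-1)\mid (p-1)/2$, and $S=0$ otherwise.

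It remains to match the divisibility dichotomy with the $2$-adic condition in the statement.  Writing $d=\gcd(m,n,p-1)$, one has $d\mid (p-1)/2$ if and only if $(p-1)/d$ is even, i.e.\ $ord_2(p-1)>ord_2(d)$; and since $ord_2(d)=\min\{ord_2(m),ord_2(n),ord_2(p-1)\}$, this is equivalent to $ord_2(p-1)>\min\{ord_2(m),ord_2(n)\}$.  Feeding this back into $|N(0)|=1+S$, together with the $p=2$ case, gives the stated formula.

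I expect the only real subtlety to be this last number-theoretic translation — in particular checking that the borderline case $ord_2(p-1)=\min\{ord_2(m),ord_2(n)\}$ falls into the ``otherwise'' branch ($S=0$), and that the $p=2$ case (where $-1=1$ and the congruence argument degenerates) is genuinely covered by the first branch of the formula rather than the second.  The linear-congruence count and the bijection with exponents are routine.
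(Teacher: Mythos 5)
Your proof is correct, and it takes a genuinely different route from the paper's. The paper works directly inside the multiplicative group ${\bf F}_p^*$: in the case $ord_2(p-1)>\min\{ord_2(m),ord_2(n)\}$ it explicitly constructs an element $x$ with $x^m\equiv -1$ (from a generator of the cyclic $2$-subgroup of order $2^{ord_2(m)+1}$), uses the bijection $(a,b)\mapsto(ax,b)$ to replace $a^m+b^n\equiv 0$ by $a^m\equiv b^n$, and then counts by summing over the subgroup $Z=B_m\cap B_n$ of common $m$-th and $n$-th powers; in the "otherwise" case it gives a separate argument deriving a contradiction from a nontrivial solution (an element $y$ of order exactly $2^{ord_2(p-1)+1}$, which cannot exist in a group of order $p-1$). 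You instead pass to discrete logarithms, converting $x^m=-y^n$ into the single linear congruence $mu-nv\equiv (p-1)/2 \pmod{p-1}$ and applying the uniform kernel/image count for a homomorphism of ${\bf Z}/(p-1){\bf Z}$-modules. This has the advantage of treating both branches of the dichotomy by one formula — the case distinction falls out automatically as the solvability criterion $\gcd(m,n,p-1)\mid (p-1)/2$ — and it makes the $2$-adic condition in the statement appear transparently as a divisibility statement, whereas the paper has to discover the threshold by explicitly exhibiting or excluding a square root of $-1$ in a suitable power subgroup. Both arguments are elementary; the paper's avoids naming a primitive root but pays for it with two separate ad hoc arguments, while yours is shorter and more structural. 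Your bookkeeping in the final step ($ord_2(d)=\min\{ord_2(m),ord_2(n),ord_2(p-1)\}$, hence $d\mid(p-1)/2$ iff $ord_2(p-1)>\min\{ord_2(m),ord_2(n)\}$, the borderline equality landing in the "otherwise" branch) is exactly right, as is the separate check of $p=2$.
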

\par
\noindent
\begin{proof} The case $p=2$ is trivial. 

If $ord_2(p-1)>min \{ord_2(m),ord_2(n) \}$ 
then w.l.o.g. assume $ord_2(p-1)>ord_2(m)$. The set $B_t = \{x^t | x \in {\bf F}_p^* 
\}$ of $t$ powers forms a subgroup of order $(p-1)/gcd(p-1,t)$ in ${\bf F}_p^*$.
Let $Z=B_m \bigcap B_n$ then $Z$ is a subgroup of order $gcd(|B_m|,|B_n|)={(p-
1)gcd(p-1,m,n) \over gcd(p-1,n)gcd(p-1,m)}$. Since $ord_2(p-1)>ord_2(m)=Y$, 
$2^{Y+1}$ divides $p-1$ and there is an element $x$ in ${\bf F}_p^*$ which 
generates the cyclic subgroup of order $2^{Y+1}$ in ${\bf F}_p^*$. Since $x$ 
generates this subgroup, $x^{2^Y} \equiv -1$ and as $m/2^{Y}$ is odd we see that 
$x^m=x^{2^Y {m \over 2^Y}} \equiv (-1)^{m \over 2^Y} = -1$.

Allowing for the point $(0,0)$, we have that $|N(0)|=1+|\{(a,b) \in {\bf F}_p^* 
\times {\bf F}_p^* | a^m+b^n \equiv 0 \bmod p \}|$. By the existence of the 
particular element $x$ above, we have that $|N(0)|=1+|\{(a,b) \in {\bf F}_p^* \times 
{\bf F}_p^* | a^m \equiv b^n \bmod p \}|$ since for each $a$ where $a^m=c$ there is a 
unique $ax$ where $(ax)^m=-c$. As $a^m \equiv b^n$ implies that $a^m$ and $b^n$ are 
in 
the subgroup $Z$ above, we can rewrite $|N(0)|=1+\sum_{z \in Z}|\{ (a,b) \in {\bf 
F}_p^* \times {\bf F}_p^* | a^m \equiv b^n \equiv z \bmod p \}|=1+\sum_{z \in Z} \{ a 
\in {\bf F}_p^* | a^m \equiv z \}| \times |\{b \in {\bf F}_p^* | b^n \equiv z \}|$. 
For each $z$ in $Z$, $|\{ a \in {\bf F}_p^* | a^m \equiv z \}| = gcd(p-1,m)$ and $|\{ 
b \in {\bf F}_p^* | b^n \equiv z \}| = gcd(p-1,n)$. Therefore, $|N(0)|=1+|Z| \times 
gcd(p-1,m)gcd(p-1,n)=1+(p-1)gcd(p-1,m,n)$.

If $ord_2(p-1) \le min \{ ord_2(m), ord_2(n) \}$ and $c=2^{ord_2(p-1)}$, we know 
that $c$ divides $m$ and $n$. Now if we have $(a,b)\not= (0,0)$ such that $a^m+b^n 
\equiv 0 \bmod p$ then $0 \equiv a^m+b^n=a^{cu}+b^{cv}$ implies that $(a^u)^c \equiv 
-(b^v)^c$. As $b \not=0$, we have the element $y=a^u(b^{-1})^v$ such that 
$y^c\equiv -1$. Thus $y^{2c} \equiv 1$ and the order of $y$ must divide $2c = 
2^{ord_2(p-1)+1}$. If the order of $y < 2c$ then the order of $y$ would divide $c$ 
implying the impossible $y^c \equiv 1$. Therefore, the order of $y$ must be equal to 
$2c$ but as $2c$ does not divide $p-1$, we have a contradiction and the only solution 
to $a^m+b^n \equiv 0 \bmod p $ is $(0,0)$. Hence, in this case $|N(0)|=1.$
\end{proof}

\section{ Computation of the Igusa local zeta function for $f(x,y)=x^n+y^m$ using the $p$-adic stationary phase formula (SPF)}
\par
\noindent
\begin{theorem} {\it A $p$-adic stationary phase formula}  \cite{igusa-spf}. The local zeta function of a polynomial $f(x)$ in $n$ variables with coefficients in ${\bf Z}_p$ is:
$$
Z(t)= (p^n-|\bar N(0)|)p^{-n} +(|\bar N(0)|-|\bar S|)p^{-n}t {1-p^{-1} \over 1-p^{-1}t} 
+ \int_{x \in S} |f(x)|^s dx
$$
where $| \bar N(0)|$ is the cardinality of the set of $x$ in ${\bf F}_p^n$ for which $f(x) = 0 \bmod p$, $|\bar S|$ is the cardinality of the set of singular $x$ in $\bar N(0)$ such that all partial derivatives of $f$ are $0$ modulo $p$ at these points, and $S$ is the set of all $x$ in ${\bf Z}_p^n$ which are congruent to vectors in $\bar S \bmod p$.
\end{theorem}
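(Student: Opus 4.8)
The plan is to partition the domain of integration into the $p^n$ residue cosets modulo $p$ and to classify each coset by the behaviour of $f$ and its partial derivatives at the corresponding point of ${\bf F}_p^n$. Write ${\bf Z}_p^n=\coprod_{a}(a+p{\bf Z}_p^n)$, the union being over (representatives of) $a\in{\bf F}_p^n$; since each coset has measure $p^{-n}$ and the union is disjoint, $Z(t)=\sum_a\int_{a+p{\bf Z}_p^n}|f(x)|^s\,dx$, so it suffices to evaluate the three types of coset contribution separately and add.

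First I would dispose of the cosets with $f(a)\not\equiv 0\bmod p$: on such a coset every $x$ satisfies $f(x)\equiv f(a)\bmod p$, hence $|f(x)|_p=1$ and the integrand equals $1$; there are $p^n-|\bar N(0)|$ of these cosets, contributing $(p^n-|\bar N(0)|)p^{-n}$, the first term. Next, on a coset with $f(a)\equiv0\bmod p$ but $\partial f/\partial x_i(a)\not\equiv0\bmod p$ for some $i$ (after a measure-preserving permutation of the coordinates, say $i=1$), substitute $x=a+py$, $dx=p^{-n}\,dy$, and Taylor-expand: since the multinomial Taylor coefficients of $f$ are integers, $f(a+py)=f(a)+p\sum_j\tfrac{\partial f}{\partial x_j}(a)\,y_j+p^2 h(y)$ with $h\in{\bf Z}_p[y]$, so $f(a+py)=p\,g(y)$ where $g(y)=f(a)/p+\sum_j\tfrac{\partial f}{\partial x_j}(a)\,y_j+p\,h(y)$ and $f(a)/p\in{\bf Z}_p$. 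Because $\partial g/\partial y_1(y)\equiv\partial f/\partial x_1(a)\bmod p$ is a unit at every point of ${\bf Z}_p^n$, the map $(y_1,\dots,y_n)\mapsto(g(y),y_2,\dots,y_n)$ is an analytic bijection of ${\bf Z}_p^n$ onto itself with unit Jacobian, hence measure-preserving --- exactly the kind of change of variables used in the second proof of the previous section. Consequently $\int_{a+p{\bf Z}_p^n}|f(x)|^s\,dx=p^{-n}\int_{{\bf Z}_p^n}|p\,u_1|^s\,du=p^{-n}t\int_{{\bf Z}_p}|u_1|^s\,du_1=p^{-n}t\,\frac{1-p^{-1}}{1-p^{-1}t}$, the last equality being the $N=1$ instance of the formula for $x^N$ computed earlier. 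There are $|\bar N(0)|-|\bar S|$ cosets of this type, which produces the second term. Finally, the cosets that remain are precisely those with $a\in\bar S$, and their union is $S$; their contribution is $\int_S|f(x)|^s\,dx$ by definition, the third term. Summing the three contributions yields the asserted identity.

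The hard part will be the measure-preserving change of variables in the second case: one must check that $(y_1,\dots,y_n)\mapsto(g(y),y_2,\dots,y_n)$ really is a bijection of ${\bf Z}_p^n$ onto itself preserving Haar measure. For fixed $y_2,\dots,y_n$ this reduces, via Hensel's lemma applied to $g(y_1,\cdot)-c$ (whose $y_1$-derivative is a unit), to the $p$-adic inverse function theorem that a power series ${\bf Z}_p\to{\bf Z}_p$ with unit derivative is bijective; the change-of-variables formula for such maps is the one already invoked for the measure-preserving transformation in the previous section. Once that is granted, the remainder is bookkeeping over the $p^n$ cosets, and the only numerical input is the single-variable integral $\int_{{\bf Z}_p}|u|^s\,du=(1-p^{-1})/(1-p^{-1}t)$ established above.
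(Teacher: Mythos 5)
The paper states this theorem with a citation to Igusa's paper \cite{igusa-spf} but gives no proof of its own, so there is nothing internal to compare against; your proposal must be judged on its own merits, and on those terms it is correct and is essentially Igusa's original argument. The decomposition of ${\bf Z}_p^n$ into the $p^n$ residue cosets modulo $p$, the trichotomy (nonzero value, nonsingular zero, singular zero), and the treatment of the nonsingular-zero cosets by factoring out one power of $p$ and then straightening via a measure-preserving change of variables whose Jacobian is $\partial f/\partial x_1(a)\bmod p$, a unit, is precisely the content of the stationary phase formula. Two small points worth tightening when you write this up: first, the identity $f(a+py)=f(a)+p\sum_j(\partial f/\partial x_j)(a)\,y_j+p^2h(y)$ with $h\in{\bf Z}_p[y]$ rests on the fact that the degree-$\alpha$ Taylor coefficient of $f$ at $a$ (that is, $\tfrac{1}{\alpha!}D^\alpha f(a)$) lies in ${\bf Z}_p$ because $f$ has ${\bf Z}_p$-coefficients; it is worth saying this explicitly rather than appealing vaguely to ``multinomial Taylor coefficients are integers,'' since it is the only place integrality of the coefficients enters. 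Second, you are right to flag the bijectivity of $(y_1,\dots,y_n)\mapsto(g(y),y_2,\dots,y_n)$ as the hard part; your appeal to Hensel's lemma in the single variable $y_1$ for each fixed $(y_2,\dots,y_n)$ does settle it, and the fact that $\partial g/\partial y_1$ reduces mod $p$ to the \emph{constant} $\partial f/\partial x_1(a)$ (hence is a unit everywhere on ${\bf Z}_p^n$, not merely near a single point) is the key observation that makes the change of variables global on the whole polydisk rather than merely local.
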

\par
\noindent
\begin{corollary} {\it To compute $Z(t)= \int_{{\bf Z}_p\times {\bf Z}_p}|x^n+y^m|^s \ dxdy$, SPF must be applied $\frac{m}{(m,n)}+\frac{n}{(m,n)}-1$ times.}
\end{corollary}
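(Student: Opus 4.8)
The plan is to run the SPF recursion explicitly and to keep a running count, working under the same hypotheses as in the $I_3'$ computation, namely $m,n\ge 2$ and that $p$ does not divide both $m$ and $n$. First I would record what a single application of SPF to $f(x,y)=x^n+y^m$ produces. Modulo $p$ the only singular point of $x^n+y^m$ is the origin, so $\bar S=\{(0,0)\}$, $S=(p{\bf Z}_p)^2$; substituting $x\mapsto px$, $y\mapsto py$ and pulling out the common power of $p$ rewrites $\int_S|f|^s$ as $p^{-2}t^{\min(m,n)}\int_{{\bf Z}_p^2}|p^{\,n-\min(m,n)}x^n+p^{\,m-\min(m,n)}y^m|^s\,dxdy$. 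So the first application yields the explicit elementary terms of SPF (those involving $|N(0)|$) together with an integral of the shape $J(A,B):=\int_{{\bf Z}_p^2}|p^Ax^n+p^By^m|^s\,dxdy$.

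Next I would nail down the recursion step for $J(A,B)$. Normalize so $\min(A,B)=0$; if $(A,B)=(0,0)$ we are back to the situation above, and if $A=B>0$ then $|p^Ax^n+p^Ay^m|^s=t^A|x^n+y^m|^s$, so $J(A,A)=t^AZ(t)$ with no further work. Otherwise, say $A=0<B$: modulo $p$ the integrand is $|x^n|^s$, so $\bar N(0)=\{x\equiv 0\}$ has $p$ points, every one of which is singular because $n\ge 2$; hence the middle SPF term vanishes and $S=p{\bf Z}_p\times{\bf Z}_p$. Substituting $x\mapsto px$ gives $p^{-1}t^{\min(n,B)}J(n-\min(n,B),\,B-\min(n,B))$, and symmetrically when $B=0<A$. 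In terms of the exponent pair taken before renormalizing, this means: the first SPF application sends $(0,0)\mapsto(n,m)$, and every later application sends $(A,B)\mapsto(A+n,B)$ if $A<B$ and $(A,B)\mapsto(A,B+m)$ if $A>B$; the chain stops the first time the two coordinates agree, at which point the integral is $t^AZ(t)$ and the whole cascade collapses to a single linear equation for $Z(t)$.

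The remaining bookkeeping is a short Euclidean-type count. Put $d=(m,n)$, $m_0=m/d$, $n_0=n/d$. Starting from $(0,0)$ the first coordinate successively takes the values $0,n,2n,\dots$ and the second $0,m,2m,\dots$, with each application advancing exactly one of them to its next value except the first, which advances both; and the chain halts the first time the coordinates coincide, which — since $(m_0,n_0)=1$ — is at the common value $m_0n=n_0m$. Hence along the whole chain the first coordinate is advanced $m_0$ times and the second $n_0$ times, for $m_0+n_0$ advances in total; since the first application accounts for one advance of each coordinate and every other application for exactly one advance, the number $T$ of SPF applications satisfies $m_0+n_0=2+(T-1)$, i.e.\ $T=m_0+n_0-1=\frac{m}{(m,n)}+\frac{n}{(m,n)}-1$.

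The main obstacle is not this count but the uniformity of the reduction step: at every node one must check that the mod-$p$ singular locus is precisely a coordinate line (or the origin at the first node), that the SPF middle term vanishes from the second application onward, and that the process never terminates early by running into a smooth reduced polynomial. Each of these reduces to the standing hypotheses $m,n\ge 2$ and ``$p$ does not divide both $m$ and $n$'', together with the elementary remark that $p^Ax^n$ (resp.\ $p^By^m$) vanishes modulo $p$ as soon as $A$ (resp.\ $B$) is positive; writing out the case split $A=0$ versus $A>0$, and verifying that the two running coordinates cannot overshoot their common value, is the fussy part of the argument.
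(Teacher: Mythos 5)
Your proposal is correct and follows essentially the same strategy as the paper: run the SPF cascade on the singular integral, observe that each step multiplies one of the two exponents by advancing it to the next multiple of $n$ (resp.\ $m$), and terminate when the two exponents coincide, which by coprimality of $m/(m,n)$ and $n/(m,n)$ first happens at $\operatorname{lcm}(m,n)$. The paper does the identical count but parametrizes the cascade by the pair of inequality indices $(k,l)$ starting at $(1,1)$ rather than by the cumulative exponent pair $(A,B)=(kn,lm)$; your bookkeeping via $J(A,B)$, the explicit verification that the middle SPF term vanishes after the first step (all mod-$p$ zeros of $x^n$ are singular once $n\ge 2$), and the advance-counting argument that gives $m_0+n_0=2+(T-1)$ make the termination and the final tally somewhat more transparent than the paper's version, but the underlying argument is the same.
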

\par
\noindent  
\begin{proof} For the curve $x^m +y^n$, SPF will terminate when the original integral appears as the singular integral. We may assume that $m<n$. For if $m=n$, SPF will terminate after one application.  The first application of SPF results in
$$\int_{{\bf Z}_p^2}|x^n+y^m|^sdxdy \ = \ I_1 \ + \ I_2 \ + p^{-2}\int_{{\bf Z}_p^2}|p^nx^n+p^mx^m|^s \ dxdy$$
Where $I_1$ and $I_2$ are known in terms of $|N(0)|$.
Taking out the factor of $p^m$ the singular integral becomes $p^{-2}t^m \ \int_{{\bf Z}_p\times {\bf Z}_p}|p^{n-m}x^n+y^m|^s \ dxdy$.
Applying  SPF to this integral produces the known terms and a new singular integral $p^{-1}\int_{{\bf Z}_p^2}|p^{n-m}x^n+p^my^m|^s \ dxdy$. To proceed, we must determine whether $n-m>m$ or $n-m<m$.  In other words, we have to know which is bigger $n$ or $2m$.  
\par
The string of SPF applications ends when instead of $>$ or $ <$ we get $=$ in which case, all powers of $p$ factor out of the integrand and the singular integral becomes $\int_{{\bf Z}_p\times {\bf Z}_p}|x^n+y^m|^s \ dxdy$.  Taking a multiple of $Z(t)$ to the other side, we are now able to solve for the rational function.
\par
With each application of SPF we get a new inequality of the form $kn<lm$ or $kn>lm$. Suppose we may assume this to be the inequality for the SPF of $$\int_{{\bf Z}_p\times {\bf Z}_p}|p^{kn-(l-1)m}x^n+p^my^m|^s \ dxdy,$$ so since $kn-(l-1)m<m$, the integral reduces to 
$$t^{kn-(l-1)m}\int_{{\bf Z}_p\times {\bf Z}_p}|x^n+p^{m-(kn-(l-1)m)}y^m|^s \ dxdy.$$
And our next application of SPF produces the integral 
$$\int_{{\bf Z}_p\times {\bf Z}_p}|p^nx^n+p^{lm-kn}y^m|^s \ dxdy$$
which can be simplified after determining which is greater, $(k+1)n$ or $lm$.  As we can see, each application of SPF increases either $k$ or $l$ by $1$, and since SPF must be applied exactly once more when $k=\frac{m}{(m,n)}$ and $l=\frac{n}{(m,n)}$ and $k=l=1$ at the start, the total number of applications of SPF will be $\frac{m}{(m,n)}+\frac{n}{(m,n)}-1.$        
\end{proof}                                 			       

\section{Computation of the Igusa local zeta for $x^n+y^m$ using resolution of singularities.}
Resolve $f((x,y))=x^3+y^2$ to find the zeta function 
$$Z(t)=\int_{{\bf Z}_p\times {\bf Z}_p}|f((x,y))|^s \ dxdy \ = \ \sum_{D_1D_n}\int_{h^{-1}({\bf Z}_p\times {\bf Z}_p)}|f\circ h|^s|h^*(dx\wedge dy)|$$
Where $h^*$ is the pull back of $h$ and $h^{-1}({\bf Z}_p\times {\bf Z}_p)=D_i$ for some i.
Let $h:M\rightarrow {\bf Z}_p^2$ such that
$$h((x_1,y_1))=(x,\frac{y}{x})$$ and $$h((\xi_1,\eta_1))=(y,\frac{x}{y}).$$
Therefore, $$h^{-1}((x,y))=\left\{
\begin{array}{ll}
(x_1,x_1y_1)& \mbox{condition described below}\\ 
(\xi_1\eta_1,\xi_1)& \mbox{condition described below}
\end{array}
\right. $$
So, $$f((x,y))=\left\{\begin{array}{l}
x_1^2(x_1+y_1^2)\\
\xi_1^2(1+\xi_1\eta_1^3)
\end{array}
\right.$$

Resolve $f((x,y))=x_1^2(x_1+y_1^2).$
$$h((x_2,y_2))=(y_1,\frac{x_1}{y_1})$$ 
$$h((\xi_2,\eta_2))=(x_1,\frac{y_1}{x_1})$$
Therefore, $$h^{-1}((x_1,y_1)=\left\{
\begin{array}{l}
(x_2y_2,x_2)\\
(\xi_2,\xi_2\eta_2)
\end{array}
\right.$$
So,$$f((x,y))=\left\{
\begin{array}{l}
x_2^3y_2^2(x_2+y_2)\\
\xi_2^3(1+\xi_2\eta_2^2)
\end{array}
\right.$$

Now, we resolve $f((x,y))=x_2^3y_2^2(x_2+y_2)$
$$h((x_3,y_3))=(x_2,\frac{y_2}{x_2})$$  $$h((\xi_3,\eta_3))=(y_2,\frac{x_2}{y_2})$$
Therefore, $$h^{-1}((x_2,y_2))=\left\{
\begin{array}{l}
x_3^6y_3^2(1+y_3)\\
\xi_3^6\eta_3^3(1+\eta_3)
\end{array}
\right.$$
Neither of these curves need to be resolved further.
Now we must find the $D_i$.
Since x and y are in ${\bf Z}_p$, $|x|_p \le 1, \ |y|_p \le 1$, so either $|\frac{x}{y}|_p\le 1$ or $|\frac{y}{x}|_p<1$.  Therefore $|x_1|\le 1, |y_1|\le 1$ and $|\xi_1|\le 1, |\eta_1|<1$.  This means that $(x_1,y_1) \in {\bf Z}_p^2$ and $(\xi_1,\eta_1) \in {\bf Z}_p\times p{\bf Z}_p$.  Therefore, $D_1={\bf Z}_p\times p{\bf Z}_p$ and the same argument holds for $D_2, D_3$, while $D_4={\bf Z}_p^2$.

\noindent
Next we calculate the change in measure due to the change of variables in $D_1$.
\begin{align*}
dx\wedge dy&=d(\xi_1,\eta_1)\wedge d\xi_1\\
\quad &=\xi_1 \ d\eta_1\wedge d\xi_1\\
\quad &=-\xi_1 \ d\xi_1\wedge d\eta_1
\intertext{Now, calculate the first piece of the zeta function over $D_1$.}
Z_1(t)&=\int_{{\bf Z}_p\times p{\bf Z}_p}|\xi_1^2(1+\xi_1\eta_1^3)|^s \ |\xi_1| \ d\xi_1d\eta_1\\
\quad &=\int_{{\bf Z}_p\times p{\bf Z}_p}|\xi_1|^{2s+1} \ d\xi_1d\eta_1\\
\quad &=p^{-1} \ \int_{{\bf Z}_p}|\xi_1|^{2s+1} \ d\xi_1\\
\quad &=p^{-1} \frac{1-p^{-1}}{1-p^{-2}t^2}
\intertext{Next we calculate the change in measure due to the change of variables in $D_2$.}
dx \wedge dy &=dx_1\wedge d(x_1y_1)\\
&=x_1 \ dx_1\wedge dy_1\\
&=\xi_2 \ d\xi_2\wedge d(\xi_2\eta_2)\\
&=\xi_2^2 \ d\xi_2\wedge d\eta_2 
\intertext{Now, calculate  the second piece of the zeta function over $D_2$.}
Z_2(t)&=\int_{{\bf Z}_p\times p{\bf Z}_p}|\xi_2^3(1+\xi_2\eta_2^2)|^s \ |\xi_2|^2 \ d\xi_2d\eta_2\\
&=\int_{{\bf Z}_p\times p{\bf Z}_p}|\xi_2|^{3s+2} \ d\xi_2d\eta_2\\
&=p^{-1} \frac{1-p^{-1}}{1-p{-3}t^3}\\
\intertext{Next we calculate the change in measure due to the change of variables in $D_3$.}
dx \wedge dy&= x_2y_2 \ d(x_2y_2)\wedge dx_2\\
&= -x_2^2y_2 \ dx_2\wedge dy_2\\
&=-(\xi_3\eta_3)^2\xi_3 \ d(\xi_3\eta_3)\wedge d\xi_3 \\
&=\xi_3^4\eta_3^2 \ d\xi_3\wedge d\eta_3 
\intertext{Now, calculate  the third piece of the zeta function over $D_3$.}
Z_3(t)&=\int_{{\bf Z}_p\times p{\bf Z}_p}|\xi_3^6\eta_3^3(1+\eta_3)|^s \ |\xi_3|^4|\eta_3|^2 \ d\xi_3d\eta_3\\
&=\int_{{\bf Z}_p}|\xi_3|^{6s+4} \ d\xi_3 \ \  \int_{p{\bf Z}_p}|\eta_3|^{3s+2} \ d\eta_3\\
&=p^{-3}t^3 \left(\frac{1-p^{-1}}{1-p^{-5}t^6}\right)
\left(\frac{1-p^{-1}}{1-p^{-3}t^3}\right)\\
\intertext{Next we calculate the change in measure due to the change of variables in $D_4$.}
dx\wedge dy&=-x_3^2x_3y_3 \ dx_3\wedge d(x_3y_3)\\
&= -x_3^4y_3 \ dx_3\wedge dy_3
\intertext{Now, calculate  the fourth piece of the zeta function over $D_4$.}
Z_4(t)&=\int_{{\bf Z}_p^2}|x_3^6y_3^2(1+y_3)|^s \ |x_3|^4|y_3| \ dx_3dy_3\\
&=\int_{{\bf Z}_p}|x_3|^{6s+4} \ dx_3 \ \ \int_{{\bf Z}_p}|y_3|^{2s+1}|y_3+1|^s \ dy_3\\
&=\frac{1-p^{-1}}{1-p^{-5}t^6} \ \sum_{a mod p} \ \int_{a+p{\bf Z}_p}|y_3|^{2s+1}|y_3+1|^s \ dy_3\\
&=\left(\frac{1-p^{-1}}{1-p^{-5}t^6}\right)\left((p-2)p^{-1} \ + \ \int_{p{\bf Z}_p}|y_3|^{2s+1} \ dy_3 \ + \ \int_{-1+p{\bf Z}_p}|y_3+1|^s \ dy_3\right)\\
&=\frac{1-p^{-1}}{1-p^{-5}t^6}
\left((p-2)p^{-1} \ + \ p^{-2}t^2\frac{1-p^{-1}}{1-p^{-2}t^2} \ + \ p^{-1}t\frac{1-p^{-1}}{1-p^{-1}t}\right)
\end{align*}

The Igusa local zeta function, $Z(t)$, is the sum of $Z_1,Z_2,Z_3,Z_4$, or 
$$Z(t)=\frac{(1-p^{-1})(1-p^{-2}t+p^{-2}t^2-p^{-5}t^6)}
{(1-p^{-1}t)(1-p^{-5}t^6)}$$

\begin{bibdiv}
\begin{biblist}
\bib{bachman}{book}{
   author={Bachman, George},
   title={Introduction to $p$-adic numbers and valuation theory},
   publisher={Academic Press},
   place={New York},
   date={1964},
   pages={ix+173},
  
}
		\bib{borewicz}{book}{
   author={Borevich, A. I.},
   author={Shafarevich, I. R.},
   title={Number theory},
   series={Translated from the Russian by Newcomb Greenleaf. Pure and
   Applied Mathematics, Vol. 20},
   publisher={Academic Press},
   place={New York},
   date={1966},
   pages={x+435},
}
\bib{goldman}{article}{
   author={Goldman, Jay R.},
   title={Numbers of solutions of congruences: Poincar\'e series for
   strongly nondegenerate forms},
   journal={Proc. Amer. Math. Soc.},
   volume={87},
   date={1983},
   number={4},
   pages={586--590},
   
}
\bib{hayes}{article}{
   author={Hayes, David R.},
   author={Nutt, Michael D.},
   title={Reflective functions on $p$-adic fields},
   journal={Acta Arith.},
   volume={40},
   date={1981/82},
   number={3},
   pages={229--248},
}
\bib{igusa-complex1}{article}{
   author={Igusa, Jun-ichi},
   title={Complex powers and asymptotic expansions. I. Functions of certain
   types},
   note={Collection of articles dedicated to Helmut Hasse on his
   seventy-fifth birthday, II},
   journal={J. Reine Angew. Math.},
   volume={268/269},
   date={1974},
   pages={110--130},
 
}
\bib{igusa-complex2}{article}{
   author={Igusa, Jun-ichi},
   title={Complex powers and asymptotic expansions. II. Asymptotic
   expansions},
   journal={J. Reine Angew. Math.},
   volume={278/279},
   date={1975},
   pages={307--321},
}
		
\bib{igusa-forms}{book}{
	AUTHOR = {Igusa, Jun-ichi},
     TITLE = {Forms of higher degree},
    SERIES = {Tata Institute of Fundamental Research Lectures on Mathematics
              and Physics},
    VOLUME = {59},
 PUBLISHER = {Tata Institute of Fundamental Research},
   ADDRESS = {Bombay},
      YEAR = {1978},
     PAGES = {iv+175},
      ISBN = {0-387-08944-6},
}

\bib{igusa-complexpow}{article}{
   author={Igusa, Jun-ichi},
   title={Some results on $p$-adic complex powers},
   journal={Amer. J. Math.},
   volume={106},
   date={1984},
   number={5},
   pages={1013--1032},

}
\bib{igusa-singular}{article}{
   author={Igusa, Jun-ichi},
   title={On the arithmetic of a singular invariant},
   journal={Amer. J. Math.},
   volume={110},
   date={1988},
   number={2},
   pages={197--233},
   }
   \bib{igusa-spf}{article}{
   author={Igusa, Jun-ichi},
   title={A stationary phase formula for $p$-adic integrals and its
   applications},
   conference={
      title={Algebraic geometry and its applications (West Lafayette, IN,
      1990)},
   },
   book={
      publisher={Springer},
      place={New York},
   },
   date={1994},
   pages={175--194},
  }
\bib{koblitz}{book}{
	edition = {2nd },
	title = {$p$-adic Numbers, $p$-adic Analysis, and
          {Zeta-Functions}},
        series = {Graduate Texts in Mathematics},
	isbn = {0387960171},
	publisher = {Springer},
	author = {Neal Koblitz},
	month = {jul},
	year = {1984}
}
\bib{lin}{article}{
   author={Lin, Chung Yuan},
   title={On the Igusa's local zeta function on $x^a+y^b$},
   conference={
      title={Algebraic geometry and algebraic number theory (Tianjin,
      1989--1990)},
   },
   book={
      series={Nankai Ser. Pure Appl. Math. Theoret. Phys.},
      volume={3},
      publisher={World Sci. Publ., River Edge, NJ},
   },
   date={1992},
   pages={64--70},
   }
		
\bib{mahler}{book}{
   author={Mahler, Kurt},
   title={$p$-adic numbers and their functions},
   series={Cambridge Tracts in Mathematics},
   volume={76},
   edition={2},
   publisher={Cambridge University Press},
   place={Cambridge},
   date={1981},
   pages={xi+320},
   isbn={0-521-23102-7},
}
\bib{meuser}{article}{
   author={Meuser, D.},
   title={On the poles of a local zeta function for curves},
   journal={Invent. Math.},
   volume={73},
   date={1983},
   number={3},
   pages={445--465},
}		
\bib{tate}{article}{
   author={Tate, J. T.},
   title={Fourier analysis in number fields, and Hecke's zeta-functions},
   conference={
      title={Algebraic Number Theory (Proc. Instructional Conf., Brighton,
      1965)},
   },
   book={
      publisher={Thompson, Washington, D.C.},
   },
   date={1967},
   pages={305--347},
}
\end{biblist}
\end{bibdiv}

\end{document}